\newcommand{\Z}{\mathbb{Z}}
\newcommand{\R}{\mathbb{R}}
\newcommand{\bp}{\begin{problem}}
\newcommand{\ep}{\end{problem}}
\newcommand{\ba}{\begin{answer}}
\newcommand{\ea}{\end{answer}}
\newcommand{\ben}{\renewcommand{\theenumi}{\alph{enumi}}

\renewcommand{\labelenumi}{(\theenumi)}\begin{enumerate}}
\newcommand{\een}{\end{enumerate}}
\newcommand{\Mod}{\mathrm{Mod}}
\newcommand{\Prim}{\mathrm{Prim}}
\begin{document}

\title{A Cryptographic Application of the Thurston norm}
\author{Ram\'{o}n Flores\inst{1}  \and Delaram Kahrobaei\inst{2} \and Thomas Koberda\inst{3}}
\institute{Department of Geometry and Topology, University of Seville, Spain\\
\email{ramonjflores@us.es}
\thanks{Ram\'{o}n Flores is partially supported by FEDER-MINECO Grant MTM2016-76453-C2-1-P.}
\and
The University of York, UK, CUNY Graduate Center and City University of New York, New York University\\
\email{delaram.kahrobaei@york.ac.uk, dk2572@nyu.edu}
\thanks{Delaram Kahrobaei was supported by the ONR (Office of Naval Research) grant N000141512164.}
\and
Department of Mathematics, University of Virginia\\
\email{thomas.koberda@gmail.com}
\thanks{Thomas Koberda is partially supported by an Alfred P. Sloan Foundation Research Fellowship and by NSF Grant DMS-1711488.}
}

\maketitle

\begin{abstract}
We discuss some applications of 3-manifold topology to cryptography. In particular, we propose a public-key and a 
symmetric-key cryptographic scheme based on the Thurston norm on the first cohomology of hyperbolic manifolds. 
\end{abstract}

\section{Introduction}

Geometric group theory and low dimensional topology have developed many powerful tools for studying groups, and many group theoretic ideas have been productive in group-based cryptography. Here, we propose importing ideas from hyperbolic geometry to build new cryptoschemes with certain security advantages.

Specifically, we consider the Thurston norm on $H^1(M,\mathbb{R})$, the first cohomology of a finite volume hyperbolic $3$--manifold, as introduced in~\cite{Thurston}. The Thurston norm measures the Euler characteristic of the simplest surface in $M$ which represents a second homology class which is Poincar\'e dual to an integral cohomology class, and extends it to the entirety of $H^1(M,\mathbb{R})$. The Thurston norm has a remarkable linear nature which makes computations with it tractable, and as outlined below, organizes the fibrations of a fibered hyperbolic $3$--manifold.

We will use the Thurston norm to build a new symmetric-key cryptographic scheme. Combined with a certain group based public key exchange, we obtain a public-key cryptoscheme which has two levels of security and in which all communications are over public channels.

National Security Agency (NSA) announced plans to upgrade current security standards in 2015; the goal is to replace all deployed cryptographic protocols with quantum secure protocols, due to the increasing possibility of quantum attacks. This transition requires a new, post-quantum, security standard to be accepted by the National Institute of Standards and Technology (NIST). Proposals for quantum secure cryptosystems and protocols have been submitted for the standardization process. There are six main primitives currently proposed to be quantum-safe: (1) lattice-based (2) code-based (3) isogeny-based (4) multivariate-based (5) hash-based, and (6) {\it group-based} cryptographic schemes.
Applications to post-quantum group-based cryptography could be shown if the underlying security problem is NP-complete or unsolvable; ideally one could analyze the relationship of the problems under consideration here to the hidden subgroup problem (HSP), then analyze Grover's search problem. As for the relationship to the HSP, the groups under consideration are infinite and so a practical way to process them needs to be developed.

In~\cite{HKMPPQ}, a practical cryptanalysis of WalnutDSA was proposed, a platform which was given in 2016 in ~\cite{AAGG} as a post-quantum cryptosystem using braid groups and conjugacy search problem, submitted to NIST competition in 2017.

There are other group-theoretic problems and classes of groups which have been proposed for post-quantum group-based cryptography, as we summarize here. The pioneers in this field were Wagner-Magyarik, who in~\cite{WM} used a right-angled Artin group as a platform,
relying on the word problem and the word choice problem; this approach was later improved by Levy-Perret in \cite{LP}. At the same time, Birget-Magliveras-Sramka~\cite{BMS} proposed a new protocol based on different groups that share some properties with Higman-Thompson
groups.
Later on, Flores-Kahrobaei and Flores-Kahrobaei-Koberda proposed right-angled Artin groups as a 
latform for various cryptographic protocols~\cite{FK,FKK}.
Eick and Kahrobaei proposed polycyclic groups as a platform, using the conjugacy search problem as a basis for security~\cite{EK}.
Gryak-Kahrobaei proposed other group-theoretic problems for consideration for polycyclic group platorms~\cite{GK}.
Kahrobaei-Koupparis~\cite{KK} proposed a post-quantum digital signature using polycyclic groups.
Kahrobaei-Khan~\cite{KK6} proposed a public-key cryptosystem using polycyclic groups~\cite{KK6}. Habeeb-Kahrobaei-Koupparis-Shpilrain  proposed public key exchanges using semidirect products of semigroups in~\cite{HKKS}.
Thompson's groups have been considered by Shpilrain-Ushakov,
with cryptoschemes based on the decomposition search problem~\cite{SU}.
Hyperbolic groups have been proposed by Chatterji-Kahrobaei-Lu,
relying on properties of subgroup distortion and the geodesic length problem~\cite{CKL}.
Cavallo-Habeeb-Kahrobaei-Shpilrain proposed using small cancellation groups for secret sharing scheme \cite{CK,HKS}.
Free metabelian groups have been proposed as a platform by Shpilrain-Zapata, with the scheme based on the subgroup membership search problem~\cite{SZ}.
Kahrobaei-Shpilrain proposed free nilpotent $p$-groups as a platform for a semidirect product public key~\cite{KS16}. Linear groups were proposed by Baumslag-Fine-Xu~\cite{BFX}, and Grigorchuk's group have been proposed in~\cite{P}. Finally in \cite{KM}, arithmetic groups were proposed as platform for a symmetric-key cryptographic scheme.

\section{Hyperbolic $3$--manifolds and the Thurston norm}

We review some well--known background about hyperbolic $3$--manifolds and the Thurston norm~\cite{Thurston}, concentrating on the case of fibered hyperbolic $3$--manifolds. 

\subsection{Generalities about the Thurston norm}
Let $M=M_{\psi}$ be a fibered hyperbolic $3$--manifold. That is to say, there is an orientable surface $S$ with negative Euler characteristic and a mapping class $\psi\in\Mod(S)$ such that $M$ is the mapping torus of $\psi$. Observe that the rank of $H^1(M,\mathbb{R})$ is at least one, since $\pi_1(M)$ surjects to $\mathbb{Z}$. Rational cohomology classes of $M$ which correspond to fibrations of $M$ are called \emph{fibered cohomology classes} of $M$. Precisely what is meant by this correspondence is the following: first, replace the given cohomology class
by the smallest nonzero  multiple which is integral. A cohomology class $\phi\in H^1(M,\mathbb{Z})$
is a homomorphism to $\mathbb{Z}$, which by standard arguments from algebraic topology is induced by a based map of spaces $\Phi\colon
M\to S^1$. After
modifying $\Phi$ by a homotopy, the generic preimage of a point will be a subsurface $S$
 of $M$ which  is Poincar\'e dual to $\phi$. If $\Phi$ is
chosen carefully enough, $S$ will be a fiber of a fibration over $S^1$. The fibration  can also be built by pulling the form $d\theta$ from $S^1$
back under $\Phi$ and integrating it. See~\cite{Thurston} for more details.

A fibered $3$--manifold $M$ is called \emph{atoroidal} if it does not contain a non-peripheral incompresible torus. Here, this means that if
$T\subset M$ is a $\pi_1$-injective copy of the torus, then $T$ can be pushed into a cusp of $M$.
It is a famous result of Thurston
that a fibered $3$--manifold admits a finite volume hyperbolic metric if and only if it is atoroidal, which in turn will happen if and only if
no power of the
mapping class $\psi$ fixes the homotopy class of a simple closed loop on $S$. Here, a simple closed loop on $S$ is an essential
copy of $S^1$ which is not parallel to a puncture or boundary component of $S$. Such a mapping class $\psi$ is called
\emph{pseudo-Anosov}.

It is a standard result from foliation theory that if the rank of $H^1(M,\mathbb{R})$ is at least two, then small perturbations of a
fibered cohomology
class $\phi\in H^1(M,\mathbb{Q})\subset H^1(M,\mathbb{R})$
will give new fibrations of $M$ over the circle which are inequivalent to $\phi$. Thurston's work~\cite{Thurston}
organized the fibrations of $M$ by defining a norm $||\cdot||_T$ on $H^1(M,\mathbb{R})$, called the \emph{Thurston norm}. The norm of a cohomology class $||\phi||_T$ is given by $\min_{S_{\phi}} |\chi(S_{\phi})|$, where this minimum is taken over surfaces which are Poincar\'e dual to $\phi$. The following summarizes the relevant features of the Thurston norm:
\begin{theorem}
Let $M$ be a compact atoroidal $3$--manifold with $\chi(M)=0$, and let $||\cdot||_T$ be the Thurston norm.
\begin{enumerate}
\item
$||\cdot||_T$ is a nondegenerate norm on the vector space $H^1(M,\mathbb{R})$;
\item
The unit norm ball is a convex polytope, all of whose vertices lie at rational points in $H^1(M,\mathbb{R})$;
\item
Let $\phi\in H^1(M,\mathbb{Z})$ be a fibered cohomology class of $M$. Then there is a maximum dimensional face $F$ of the unit ball of $||\cdot||_T$ such that $\phi\in\mathbb{R}\cdot F$. Moreover, every primitive integral cohomology class $\phi\in\mathbb{R}\cdot F$ is fibered. The face $F$ is called a \emph{fibered face} of the unit norm ball.
\end{enumerate}
\end{theorem}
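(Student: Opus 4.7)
The plan is to prove the three items in sequence, since each builds on the previous. For item (1), I would first define $\|\phi\|_T$ on integral classes $\phi\in H^1(M,\Z)$ as the minimum of $\chi_-(S) := \sum_i \max\{0,-\chi(S_i)\}$ over embedded oriented surfaces $S=\bigsqcup_i S_i$ Poincar\'e dual to $\phi$. I would then check homogeneity $\|n\phi\|_T=|n|\cdot\|\phi\|_T$ by taking $|n|$ parallel copies of a minimizer for one inequality, and a taut/norm-minimizing surface argument for the other. Subadditivity $\|\phi+\psi\|_T \le \|\phi\|_T+\|\psi\|_T$ follows from the double curve sum: place minimizers $S_\phi$ and $S_\psi$ in general position, resolve the intersection curves compatibly with co-orientations, and verify that $\chi_-$ is preserved under the resolution. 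One extends by $\Q$-homogeneity to $H^1(M,\Q)$ and then continuously to $H^1(M,\R)$. Nondegeneracy uses the assumptions $\chi(M)=0$ and atoroidality to rule out representatives with non-negative Euler characteristic: spheres and disks are compressible or null-homologous, while tori and annuli contradict atoroidality or can be pushed into a cusp.

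For item (2), I would exploit the fact that $\|\cdot\|_T$ takes integer values on the lattice $H^1(M,\Z)$, since $\chi_-$ is an integer. A standard duality argument then shows that a norm with this property on a finite-dimensional lattice must have a rational polytopal unit ball: the unit ball of the dual norm is the convex hull of finitely many integer dual classes realizing minimal pairing with some integer primal class, hence a rational polytope, and dualizing returns the statement for $\|\cdot\|_T$ itself. The vertices of the Thurston ball are thereby forced to lie at rational points.

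For item (3), I would begin with a fibered class $\phi$ represented by the pullback $\omega=\Phi^*d\theta$ of the angle form under a fibration $\Phi\colon M\to S^1$. This closed $1$-form is nowhere vanishing, so any sufficiently $C^0$-small closed perturbation of $\omega$ remains nowhere vanishing, and by Tischler's theorem any such perturbation with rational cohomology class defines a fibration. This shows that the set of fibered classes is an open cone in $H^1(M,\R)$. To identify this open cone with the interior of the cone on a top-dimensional face of the unit ball, I would establish that $\|\cdot\|_T$ is linear on the fibered cone: a fiber $S_\phi$ is norm-minimizing because the fibration is a taut foliation, and the Euler characteristic of the fiber varies linearly in the cohomology class, using the suspension flow of the pseudo-Anosov monodromy as a common cross-section structure for all nearby fibrations (in the spirit of Fried's theorem). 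A convex norm that is linear on an open cone must be linear on the cone over a face of its unit ball, and openness of the fibered locus forces the face to be of top dimension.

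The main obstacle, I expect, is the subadditivity step and its companion claim that the double curve sum of two $\chi_-$-minimizing surfaces really does not increase total complexity; this requires a careful analysis of co-orientations at intersection curves and of the possible creation of spherical or disk components which must be ruled out or shown to be cappable off without changing the dual class. Close behind is the linearity of $\|\cdot\|_T$ on the fibered cone in item (3), which is the deepest content of Thurston's theorem and what ultimately permits the identification of the fibered locus with the cone on a face; it rests on the interplay between tautness of the suspension foliation, the dynamics of pseudo-Anosov monodromies, and the continuous dependence of the fiber on the defining cohomology class.
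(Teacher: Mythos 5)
The paper does not prove this theorem at all: it is stated as background, quoted directly from Thurston's memoir \cite{Thurston}, so there is no in-paper argument to compare against. Your outline is essentially a faithful reconstruction of Thurston's original proof and its standard modern supplements: the definition via $\chi_-$ on integral classes with homogeneity and subadditivity by oriented double-curve sum, extension to $H^1(M,\mathbb{R})$, nondegeneracy from atoroidality/irreducibility, the integer-valuedness-on-the-lattice argument forcing a rational polyhedral unit ball, and for item (3) the openness of the fibered cone (Tischler-type perturbation of a nonvanishing closed $1$-form) together with linearity of the norm on that cone (via norm-minimality of fibers and a Fried-style cross-section argument), which by convexity identifies the fibered cone with the cone over the interior of a top-dimensional face. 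You also correctly flag the two genuinely delicate points: controlling $\chi_-$ under cut-and-paste (removing inessential intersection curves so no sphere or disk components appear) and the linearity on the fibered cone, which is the deep content. One small caution: the conclusion is that the integral classes in the \emph{open} cone $\mathbb{R}_+\cdot\interior(F)$ are fibered; classes lying over the boundary of $F$ need not be, so your identification should be phrased (as you essentially do) in terms of the open face.
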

Here, an integral cohomology class if called \emph{primitive} if it is nonzero and if it is
 not an integer multiple of another integral cohomology class. Viewed as a tuple of vectors, an integral cohomology class is primitive if and only if the entries of the tuple are relatively prime and not all zero.

Let $\phi\in H^1(M,\mathbb{Z})$ be a fibered cohomology class. Then $M$ fibers over the circle with fiber $S=S_{\phi}$, where $\pi_1(S)<\pi_1(M)$ is identified with $\ker\phi$. The following proposition is standard and we include its proof for the convenience of the reader.

\begin{proposition}\label{prop:linear}
Suppose $M$ is hyperbolic, and let $S$ be the fiber of a fibration of $M$ over $S^1$. Then $\pi_1(S)<\pi_1(M)$ is exponentially distorted, and the membership problem for $\pi_1(S)$ is solvable in linear time.
\end{proposition}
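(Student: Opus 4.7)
The plan is to exploit the semidirect product structure of $\pi_1(M)$ coming from the fibration. Since $M$ is a hyperbolic fibered $3$--manifold, the monodromy $\psi$ is pseudo-Anosov, and $\pi_1(M)\cong \pi_1(S)\rtimes_{\psi_*}\mathbb{Z}$, with a stable letter $t$ that conjugates $\pi_1(S)$ by $\psi_*$. The distinguished homomorphism $\phi\colon \pi_1(M)\to\mathbb{Z}$ sending $t\mapsto 1$ and $\pi_1(S)\mapsto 0$ realizes $\pi_1(S)$ as the kernel of $\phi$.

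For the linear time membership claim, I would fix a finite generating set for $\pi_1(M)$, extending a finite generating set of $\pi_1(S)$ by the single letter $t$. Any element $w\in\pi_1(M)$ lies in $\pi_1(S)$ if and only if $\phi(w)=0$. Given a word of length $n$ in the chosen generators, evaluating $\phi$ amounts to reading off and summing the exponents of $t$, which is a single linear pass through the input; hence membership is decidable in linear time.

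For the distortion claim, the upper bound is immediate from Thurston's hyperbolization, which tells us that $\pi_1(M)$ is a word hyperbolic group, together with the general fact that every finitely generated subgroup of a word hyperbolic group is at most exponentially distorted. The lower bound is where the geometry enters: choose any $\gamma\in\pi_1(S)$ represented by an essential simple closed curve on $S$. Then the conjugate $t^n\gamma t^{-n}$ lies in $\pi_1(S)$ and has word length $O(n)$ in $\pi_1(M)$, but equals $\psi_*^n(\gamma)$ in $\pi_1(S)$. Because $\psi$ is pseudo-Anosov with dilatation $\lambda>1$, the geometric length of $\psi^n(\gamma)$, measured with respect to any hyperbolic metric on $S$, grows like $\lambda^n$. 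Since geometric length and word length in $\pi_1(S)$ are quasi-isometric, this forces $|\psi_*^n(\gamma)|_{\pi_1(S)} \succeq \lambda^n$, yielding the required exponential lower bound on the distortion function.

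The main obstacle is the lower bound: one needs to convert the dynamical growth rate $\lambda^n$ of the pseudo-Anosov into a genuine lower bound on word length in $\pi_1(S)$, rather than mere geometric length on $S$. This passes through the standard fact that the orbit map from $\pi_1(S)$ (with any word metric) to $\widetilde S=\mathbb H^2$ is a quasi-isometry, so the translation length of a hyperbolic isometry, the geodesic length of its axis quotient, and the word length of the conjugacy class representative all differ by uniformly bounded multiplicative and additive constants. Once this identification is in hand, the rest is a matter of recording constants.
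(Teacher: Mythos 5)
Your membership argument and your exponential lower bound are essentially the paper's own proof: evaluate the fibration class $\phi\colon\pi_1(M)\to\mathbb{Z}$ on a word by summing its values on the generators (a single linear pass), and compare the linear $\pi_1(M)$-length of $t^{-n}\gamma t^{n}$ with the $\lambda^{n}$ growth of $\psi_*^{n}(\gamma)$ inside $\pi_1(S)$. Your extra care in converting the pseudo-Anosov growth of geodesic lengths into word length via the Milnor--\v{S}varc lemma (and the fact that conjugacy length bounds word length from below) is precisely the content the paper outsources to Expos\'e 10 of Fathi--Laudenbach--Po\'enaru, so on these points you and the paper coincide.

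The one genuine flaw is your upper bound. It is not a true general fact that every finitely generated subgroup of a word hyperbolic group is at most exponentially distorted: Mitra's examples, obtained by iterating hyperbolic extensions via the Bestvina--Feighn combination theorem, give finitely generated subgroups of hyperbolic groups whose distortion is an arbitrarily high iterated exponential, and even worse. So that step fails as stated. Fortunately the upper bound here is elementary and does not use hyperbolicity of $\pi_1(M)$ at all: it follows from the semidirect product structure. Let $C=\max_{s}\bigl(|\psi_*(s)|,|\psi_*^{-1}(s)|\bigr)$ over the chosen generators $s$ of $\pi_1(S)$. A word of length $n$ in the generators of $\pi_1(M)$ representing an element of $\pi_1(S)$ has $t$-exponent sum zero, and pushing all occurrences of $t^{\pm1}$ to one side (where they cancel) replaces each fiber generator by its image under at most $n$ applications of $\psi_*^{\pm1}$, yielding a word in $\pi_1(S)$ of length at most $nC^{n}$; hence the distortion is at most exponential. (For calibration, the paper itself only proves the exponential lower bound, which is what its cryptoscheme actually uses, so with the corrected upper bound your write-up is if anything more complete.)
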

\begin{proof}
We have that $\pi_1(M)$ is a semidirect product of $\mathbb{Z}$ with $\pi_1(S)$, where the conjugation action of a generator $t$ of $\mathbb{Z}$ is given by a pseudo-Anosov mapping class of $\pi_1(S)$. If $1\neq \gamma\in\pi_1(S)$ and $\psi$ is a pseudo-Anosov mapping class which has been lifted to an automorphism of $\pi_1(S)$, then the length of the shortest representative in the conjugacy class of $\psi^n(\gamma)$ grows like $\lambda_{\psi}^n$, where $\lambda_{\psi}>1$ is a real number called the \emph{stretch factor} of $\psi$. Since conjugation by $t$ acts on $\pi_1(S)$ by $\psi$, we have that $\psi^n(\gamma)=t^{-n}\gamma t^n$, a word whose length is linear in $n$. It follows that $\pi_1(S)<\pi_1(M)$ is exponentially distorted. We refer the reader to Expos\'e 10 of~\cite{FLP} for details on word growth entropy
and pseudo-Anosov mapping classes.

Now suppose that $g\in\pi_1(M)$ is a given element, and we wish to determine if $g\in\pi_1(S)$. The group $\pi_1(M)$ surjects to $\Z$ by a homomorphism $\phi$, and the kernel of this map is exactly $\pi_1(S)$. If $\pi_1(M)=\langle g_1,\ldots,g_k\rangle$, then $\phi$ is determined by its values on the generators of $\pi_1(M)$. If $g\in\pi_1(M)$ is a product of $N$ generators of $\pi_1(M)$, we compute the value of $\phi$ on the $N$ generators needed to represent $g$ and add them up, which requires computational resources bounded by a linear function in $N$. If the resulting sum is zero, then $g\in\pi_1(S)$. If the sum is nonzero then $g\notin\pi_1(S)$.
\end{proof}

\subsection{Examples}
From the general description, the Thurston norm seems very difficult to compute and hence unwieldy for many practical applications.
However, there are many situations in which the Thurston norm can be computed, at least in a cone over a fibered face.

In~\cite{McPaper}, McMullen defined the Teichm\"uller polynomial associated to a fibered face and gave a practically implementable algorithm for computing it. From the Teichm\"uller norm (computed from the Teichm\"uller polynomial) and the Alexander norm (computed from the Alexander polynomial) on $H^1(M,\mathbb{R})$ for a fibered hyperbolic $3$--manifold $M$, one can compute a fibered face of the unit Thurston norm ball, the cone over which contains the given fibered cohomology class. He indicates how to carry out these computations for certain pseudo-Anosov braids of the $4$--times punctured sphere.

A very detailed computation of the Alexander and Teichm\"uller norm for a particular two--cusped hyperbolic $3$--manifold, namely the sibling of the Whitehead link complement, was carried out by Aaber--Dunfield~\cite{AaberDunfield}. The authors of that paper explicitly computed the Alexander and Teichm\"uller polynomials of the sibling of the Whitehead link complement, as well as the whole Thurston norm ball, which is a square with unit side length centered at the origin. They show that all four faces of the square are fibered, and so that every primitive cohomology class of the manifold is fibered except the ones lying on the two lines passing through the corners of the square.

For other $3$--manifolds presented as mapping tori of multiply punctured disks, an algorithm for computing the Teichm\"uller polynomial
(as well as further topological data associated to other fibrations of the manifold) was proposed by Lanneau--Valdez~\cite{LanVal}.
Thus, the theory we apply in this paper is rich with computationally tractable examples.

\section{An application to public-key cryptography}\label{sec:public}

We now propose a cryptographic scheme which uses the Thurston norm. Alice and Bob are communicating over an insecure channel. The following information is public:

\begin{itemize}
\item
A fibered hyperbolic $3$--manifold $M$ with $H^1(M,\R)$ of rank at least two, a fixed finite presentation of $\pi_1(M)$, and a fibered face $F$ of the Thurston norm ball. For instance, one could use Thurston's example of the simplest pseudo-Anosov braid,
or alternatively Aaber--Dunfield's example, as described above.
\item
For every primitive integral cohomology class $\phi\in\R_+\cdot F$, a standard presentation $\mathcal{P}_{\phi}$ of the corresponding fiber group $\pi_1(S_{\phi})$, a stable letter $t_{\phi}\in\pi_1(M)$, and an automorphism $\psi_{\phi}$ of $\pi_1(S_{\phi})$
such that $\pi_1(M)$ is the semidirect product of $\pi_1(S_{\phi})$ with $\Z$ with the conjugation action of
$t_{\phi}$ given by $\psi_{\phi}$. An explicit isomorphism between $\pi_1(M)$ with its fixed presentation and this semidirect product presentation \[\langle \mathcal{P}_{\phi},t_{\phi}\mid t_{\phi}xt_{\phi}^{-1}=\psi_{\phi}(x)\rangle\]
corresponding to the fibered class $\phi$ is included in the data. We write $\Prim(\R_+\cdot F)$ for the collection of such primitive integral cohomology classes. Here, the choice of the stable letter is not strictly necessary, but it precludes the need for an extra conjugacy decision
later on in the cryptoscheme.

Strictly speaking, $\Prim(\R_+\cdot F)$ is an infinite collection of data, so it is necessary to define precisely what it means to share it. In practice,
one would have a searchable database which is large enough to give Alice and Bob a rich collection of choices, and so that over a course
of their communication, no repeats would be necessary. One could restrict
the database to primitive rational cohomology classes in $F$ whose denominators are at most some arbitrary large cutoff say $10^{12}$.
\item
A finitely generated group $G$ suitable for a public-key cryptographic scheme such as the Anshel-Anshel-Goldfeld protocol (see~\cite{AAG})
and an efficiently computable function \[f\colon G\to \Prim(\R_+\cdot F).\]
\end{itemize}

Here, a \emph{standard presentation} for a fiber group is either a presentation of a free group with finitely many generators and no relations, or the standard presentation of a closed surface group of genus $g$: \[\pi_1(S_g)=\langle a_1,b_1,\ldots,a_g,b_g\mid [a_1,b_1]\cdots [a_g,b_g]=1\rangle.\] 
The scheme is as follows:

\begin{enumerate}
\item
Alice and Bob use a public-key cryptographic scheme in order to produce a shared private key $g\in G$.
\item
Using the public function $f$, Alice and Bob agree on the fibered cohomology class $\phi=f(g)$ with corresponding fiber group $\pi_1(S)$ with presentation $\mathcal{P}$ and automorphism $\psi$.
\item
Alice chooses an arbitrary positive integer $N$ and computes the length of $\psi^N(s)$ for every generator of $\pi_1(S)$ in the presentation $\mathcal{P}$. The key is the maximum length $\ell_{\max}$ obtained in this way, on the generator $s_{\max}$ in $\mathcal{P}$.
\item
Over a public channel, Alice sends a finite collection $\{x_1,\ldots,x_t\}\subset \pi_1(M)$, written in terms of the fixed generators for $\pi_1(M)$, and of length comparable to $N$.
She chooses these elements in such a way that if $X_{\phi}$ is the generating set in the presentation $\mathcal{P}_{\phi}$, then
 $\psi^N(X_{\phi})\subset \{x_1,\ldots,x_t\}$, but so that all but $|X_{\phi}|$
 of the elements $\{x_1,\ldots,x_t\}$ do not belong to $\pi_1(S)$. We assume that $|X_{\phi}|\ll t$.
\item
Bob checks which of these elements lie in $\pi_1(S)$.
\item
Bob uses the fact that $\psi$ is given by conjugation by an element of $\pi_1(M)$ to recover $N$.
\item
Bob computes the length of $\psi^N(s)$ for each generator of $\pi_1(S)$ and recovers $\ell_{\max}$.
\end{enumerate}

We remark that the value of $\ell_{\max}$ is uniquely determined by Alice's initial choice of $N$.

\section{An explicit example}\label{sec:example}

We consider the example of a fibered $3$-manifold coming from the simplest pseudo-Anosov braid, as worked out in~\cite{McPaper}.

\subsection{The $3$-manifold}
The
initial fiber is $S_0$, which is identified with the thrice punctured disk, and whose mapping class group is identified with the three--stranded
braid group $B_3$. In the standard braid generating set $\{\sigma_1,\sigma_2\}$, the simplest pseudo-Anosov braid is given by
$\beta=\sigma_1\sigma_2^{-1}$.

We have that $\pi_1(S_0)=\langle x,y,z\rangle\cong F_3$, where these generators are identified with small based loops about the three
punctures of $S_0$. We have \[\sigma_1\colon (x,y,z)\mid (y,y^{-1}xy,z)\] and \[\sigma_2^{-1}\colon (x,y,z)\mapsto (x,yzy^{-1},y).\] Thus,
we have \[\beta\colon (x,y,z)\mapsto (yzy^{-1},yz^{-1}y^{-1}xyzy^{-1},y).\] A presentation for the fundamental group of the
fibered $3$-manifold associated to $\beta$
can be written as \[\pi_1(M)=\langle t,x,y,z\mid t^{-1}xt=yzy^{-1},t^{-1}yt=yz^{-1}y^{-1}xyzy^{-1},t^{-1}zt=y\rangle.\]

It is easy to see that $\beta$ acts transitively on the punctures of $S_0$, and so that $H_1(M,\Z)\cong\Z^2$. If $\phi\in H^1(M,\Z)$, then
$\phi$ is determined by its values on $t$ and on $x$, so that we may write $\phi=(a,b)\in\Z^2$ for the cohomology class which satisfies
$\phi(t)=a$ and $\phi(x)=b$. McMullen computes the Thurston norm on $H^1(M,\R)$ and shows that it is given by
\[||\phi||_T=\max\{|2a|,|2b|\}.\] Each face of the Thurston unit norm ball is fibered. The face $F$ whose cone contains $(1,0)$ is therefore
given by $F=\{1/2\}\times [-1/2,1/2]$.

\subsection{From a public key to a new fibration}

Let $G$ be a finitely generated group suitable for the Anshel-Anshel-Goldfeld protocol, with a fixed normal form. Then
we can construct a computable function which associates to elements of $G$ various fibrations of $M$. If $g\in G$, we write
$g$ in a normal form and let $|g|$ denote the length of $g$ in this normal form. Then, we may set
\[f(g)=D(g)\left(\{1/2\},\left\{\frac{|g|}{|g|+1}-1/2\right\}\right),\] where here $D(g)$ is chosen to be the smallest positive integer so that
$f(g)\in\Z^2$. Note that $D(g)\leq\mathrm{lcm}\{2,|g|+1\}$. Thus, we have associated to $g\in G$ a new cohomology class
 $f(g)$ which is defined
by \[f(g)(t)=D(g)/2\] and \[f(g)(x)=|g|\cdot D(g)/(|g|+1)-D(g)/2.\] This cohomology class will represent a new fibration provided $|g|\neq 0$.
We remark that the
function $f$ constructed here is just one possible example which would suit our purposes. There are many other suitable candidates
for $f$.

\subsection{New fiber subgroups}

Given $\phi\in H^1(M,\Z^2)$. We have that the new fiber subgroup $\pi_1(S_{\phi})$ is given by the kernel of $\phi$, viewed as the composition
\[\pi_1(M)\to H_1(M,\Z)\to \Z,\] where the first map is the abelianization map and where the second map is $\phi$. The group $\pi_1(S_{\phi})$
will always be a finite rank free group, and its rank can be computed as $||\phi||_T+1$, since $||\phi||_T$ denotes the
absolute value of the Euler characteristic
of the fiber. Finding a presentation for the fiber subgroup is sometimes possible~\cite{Brown,DKL}, though in general it may be difficult.
This is why we assume that free presentations for fiber subgroups are part of the public data.

\subsection{Distortion of lengths}

The advantage of the scheme we propose is that a very large integer is encoded by a relatively small one. The essential point is that
both Alice and Bob do computations in $\pi_1(M)$, essentially just conjugation. The secrecy of the scheme is entirely in the choice of
fibration, which in turn gives a mapping class and an exponentially distorted subgroup. Other than conjugation, which results in linear
growth of words in $\pi_1(M)$, Proposition~\ref{prop:linear} shows that membership in the fiber subgroup is computable in linear time.
Alice and Bob use their common knowledge of the fiber subgroup to extract an integer $N$, which is on the order of the logarithm of the
shared key. It is in this last step, passing from $N$ to $\ell_{\max}$ that the exponentially distorted subgroup comes into play.

Alice picks $N$, and her key
is the maximal length of $\psi_{\phi}^N$
applied to elements of the free generating set of the fiber subgroup. To communicate $N$ over the channel,
she applies $\psi_{\phi}^N$, viewed as conjugation by  the element of $t_{\phi}\in\pi_1(M)$. The resulting elements of $\pi_1(M)$
will have lengths which are linear in $N$.
When Alice sends information over the
channel, Bob applies $t_{\phi}$ successively to generators of $\pi_1(S_{\phi})$ and checks to see if the generating set lies in
$\{x_1,\ldots,x_t\}$. This will require linearly many calculations in $N$ and $t$, since the word problem in hyperbolic $3$-manifold
groups has linear complexity. Once Bob finds the first such $N$, this will be the same
value of $N$ as chosen by Alice, since $\psi_{\phi}$ is pseudo-Anosov and is therefore not periodic.

The key that Alice and Bob wish to share is instead $\ell_{\max}$, which is the maximal length of $\psi_{\phi}^N$ applied to a generator
of $\pi_1(S_{\phi})$, viewed as an element of $\pi_1(S_{\phi})$. Alice and Bob now both know $N$, and thus can recover $\ell_{\max}$,
since
$\psi_{\phi}$ is a public automorphism of the free group $\pi_1(S_{\phi})$. The size of $\ell_{\max}$ will be exponential in $N$.
The precise distortion can be computed from the Teichm\"uller polynomial. McMullen computes the Teichm\"uller polynomial for that fibered face
to be \[\Theta(t,u)=1-t(1+u+u^{-1})+t^2.\] Here, the polynomial
$\Theta(t,u)$ is viewed as an element of the group ring $\Z[H,t^{\pm1}]$,
where here $H=\langle u\rangle$ denotes the $\psi_{\phi}$--invariant homology of $S_{\phi}$.
In this case, the generator $t$ can be identified with the stable letter $t$
of the fibration defining $M$, and we can write the other generator as $u=[x]+[y]+[z]$, the sum of the homology classes of the three punctures.
Then, we can write \[\Theta(t,u)=\sum_{g\in H_1(M,\Z)}a_g g,\] with $a_g\in \Z$.
If $\phi\in H^1(M,\Z)$, then we view $\phi$ as an element of $\mathrm{Hom}(\pi_1(M),\Z)$ and and we can write
\[\Theta(k)=\sum_{g\in H_1(M,\Z)} a_gk^{\phi(g)}.\] We set $\lambda_{\phi}$ to be the largest root of $\Theta(k)$, which
will always be a real number which is greater than one. Then we have $\ell_{\max}\sim\lambda_{\phi}^N$.

For the specific Teichm\"uller polynomial \[\Theta(t,u)=1-t(1+u+u^{-1})+t^2,\] we work out two examples of stretch factors $\lambda_{\phi}$
for two different  fibered cohomology classes. The canonical class $\phi=(1,0)$ is the one describing the original fibered $3$--manifold.
In this  case, we compute $\phi(t)=1$ and $\phi(u)=0$. We thus obtain the polynomial $\Theta(k)=k^2-3k+1$, whose largest root  is one more than the Golden
Ratio $(3+\sqrt{5})/2$. This is well--known to be the stretch factor of $\beta$.

We also consider the class $\phi=(2,1)$. It is easy to check that this class lies in the cone over $F$. We obtain the polynomial
$\Theta(k)=k^4-k^3-k^2-k+1$, which one discovers by numerical calculation to have a root at $k\approx 1.72208$.

\subsection{Generalization to other pairs of groups}
In principle, there is nothing special about the pair $(\pi_1(M),\pi_1(S))$ for a fiber subgroup of a hyperbolic $3$--manifold. For the applications,
one could  use any pair of a finitely generated group and a finitely generated exponentially distorted subgroup (cf. \cite{CKL}). The particular
advantage of using the Thurston norm is that from a single hyperbolic group, we obtain infinitely many different exponentially distorted
subgroups which are distorted in different ways. Moreover, the Teichm\"uller polynomial organizes the different fiber subgroups in a
computationally convenient way. In general, such a framework does not exist for arbitrary such pairs of groups. A notable exception,
on which one could also base a similar cryptoscheme, is the class of free--by--cyclic groups~\cite{AHR,DKL}.

\section{An application to symmetric-key cryptography}

The scheme developed in Sections~\ref{sec:public} and~\ref{sec:example} can be simplified somewhat to yield a symmetric-key cryptographic
scheme. Such a scheme would eschew the need for an initial private shared key. The setup for this scheme would be as follows.

{\bf Public information:} The fibered hyperbolic manifold $M$, presented as a mapping torus,
would be public as before. A presentation for $\pi_1(M)$ would also be public.

{\bf Private information:} Alice and Bob would agree beforehand on a fibered cohomology class $\phi$ of $M$. This would yield a private fibered
subgroup $\pi_1(S_{\phi})$ with a preferred presentation, and a private automorphism $\psi_{\phi}$ of $\pi_1(S_{\phi})$.

The implementation of the cryptoscheme would be as follows:

\begin{enumerate}
\item
Alice chooses an arbitrary integer $N$.
\item
Alice communicates the integer $N$ to Bob over a public channel.
\item
Alice and Bob both compute $\psi_{\phi}^N$ on the generators of $\pi_1(S_{\phi})$ in the preferred presentation, and compute the lengths
of the resulting words.
\item
The shared key is $\ell_{\max}$, the longest length of a word obtained by applying $\psi_{\phi}^N$ to the generators.
\end{enumerate}

\section{Remarks on security}

There are several layers of security which are built into the schemes we propose.
In the public-key scheme, the first layer of security lies in similar security assumptions of the public key-exchange scheme \`a la
Anshel-Anshel-Goldfeld (AAG). We recall that this is used by Alice and Bob to agree on a fibered cohomology class.
We note that there have been several proposals for the platform group for the AAG protocol, such as braid groups,
polycyclic groups, Grigorchuk groups, certain classes of right-angled Artin groups and their subgroups,
in which the simultaneous conjugacy search problem is difficult.
Apart from a few proposed attacks (see~\cite{Tsaban,Romanikov} for attacks adapted to braid group and linear group platforms as well
as potential ripostes),
the AAG scheme remains secure for suitable choices of parameters.
In the symmetric-key scheme, the use of AAG is unnecessary, thus removing any vulnerabilities to attacks on that protocol.

In both schemes, the key that Alice and Bob share will be much larger than any of the public data over the channel. Thus, even if
the eavesdropper is able to guess $N$, the key $\ell_{\max}$
is exponentially longer than $N$ and would therefore be much more difficult to guess,
short of knowing which cohomology class Alice and Bob are using.

For Alice and Bob, computations inside the group $\pi_1(S)$ are easy since this group is either free or a surface group, and hence computing word lengths and solving the word problem is relatively easy (using small cancellation theory, for instance)
from the standard presentations given in the public data.

Finally, since  the public data of the fibered classes is public and  since for practical  purposes it must be truncated to be a finite list, it is
conceivable that an eavesdropper would simply use the database of fibrations and an efficient solution to the conjugacy problem to discover
the key without knowing the secret fiber. The eavesdropper could conceivably  check each fixed presentation in the database and look for
words sent by  Alice in which the stable letter appears with  exponent sum zero.
This vulnerability can be overcome by making the database very large compared to the size of the
key, so that processing the whole database would be prohibitive. Thus, the key shared by Alice and Bob would be obsolete by the time the
eavesdropper could compute it.

\section{Acknowledgements}
The authors thank the anonymous referees for their careful reading of the manuscript and for their helpful comments.

%

\begin{thebibliography}{1}

\bibitem{AaberDunfield}
J. W. Aaber and N. Dunfield, {\it Closed surface bundles of least volume},
Algebr. Geom. Topol. 10 (2010), no. 4, 2315--2342.

\bibitem{AHR}
T. Algom-Kfir, Yael, E.  Hironaka, K. Rafi,
{\it Digraphs and cycle polynomials for free-by-cyclic groups},
Geom. Topol. 19 (2015), no. 2, 1111--1154.

\bibitem{AAG} I. Anshel and M. Anshel and D. Goldfeld, {\it An algebraic method for public-key cryptography}, Math. Res. Let.6, 1999, 287--291.

\bibitem{AAGG}
I. Anshel, D. Atkins, D. Goldfeld, P. Gunnells, {\it WalnutDSATM: A Quantum Resistant Group Theoretic Digital Signature Algorithm}, \url{https://www.nist.gov/sites/default/files/documents/2016/10/19/atkins-paper-lwc2016.pdf}, 2016.

\bibitem{AAGL}
I. Anshel, M. Anshel, D. Goldfeld, and S. Lemieux, {\it Key
agreement, the Algebraic Eraser, and lightweight cryptography},
Algebraic methods in cryptography, Contemp. Math. Amer. Math. Soc.
{\bf 418} (2006),  1--34.

\bibitem{Bau}
A. Baudisch, {\it Subgroups of semifree groups}, Acta Math. Acad. Sci. Hungar. {\bf 38} (1981), no. 1-4, 19--28.

\bibitem{BFX} G. Baumslag, B. Fine, and X. Xu, {\it Cryptosystems using linear groups}, Appl. Algebra Eng. Comm. Comput., 17:205--2017, 2006.

\bibitem{BMS}
J. C. Birget, S S. Magliveras , M. Sramka, {\it On public-key cryptosystems based on combinatorial group theory}, Tatra Mt. Math. Publ., {\bf 33} (2006), 137--148.

\bibitem{MB}
M. Bridson, {\it On the subgroups of right-angled Artin groups and mapping class groups},
Math. Res. Lett., {\bf 20} (2013), 203--212.

\bibitem{Brown}
K. Brown, {\it Trees, valuations, and the Bieri-Neumann-Strebel invariant}, Inv. Math. {\bf 90} (1987), 479--504.

\bibitem{CK}   B. Cavallo, D. Kahrobaei, {\it Secret sharing using the Shortlex order and non-commutative groups},
Contemporary Mathematics {\bf 633}, AMS, 1--8, (2015).

\bibitem{charney}
R. Charney, {\it An introduction to right-angled Artin groups}, Geom. Dedicata, {\bf 125} (2007), 141--158.

\bibitem{CKL}  I. Chatterji, D. Kahrobaei, N. Y. Lu, {\it Cryptosystems Using Subgroup Distortion}, Theoretical and Applied Informatics {\bf 29}, 14--24 (2017).

\bibitem{DH}
W. Diffie  and  M.~E. Hellman,  {\it New Directions in
Cryptography}, IEEE Transactions on Information Theory {\bf IT-22}
(1976), 644--654.

\bibitem{DKL}
S. Dowdall, I. Kapovich, and C. Leininger, {\it McMullen polynomials and Lipschitz flows for free-by-cyclic groups},
J. Eur. Math. Soc. 19 (2017), no. 11, 3253--3353.

\bibitem{EK} B. Eick and D. Kahrobaei, {\it Polycyclic groups: a new platform for cryptography}, \url{math.gr/0411077} Technical report (60 citations), 2004.

\bibitem{FLP} A. Fathi, F. Laudenbach, V. Po\'enaru, {\it Travaux de Thurston sur les surfaces},
S\'eminaire Orsay. Ast\'erisque, 66--67. Soci\'et\'e Math\'ematique de France, Paris, 1979.

\bibitem{FK}
R. Flores, D. Kahrobaei, {\it Cryptography with Right-angled Artin Groups}, Theoretical and Applied Informatics, {\bf  28} (2016), no. 3, 8--16.

\bibitem{FKK} R. Flores, D. Kahrobaei, T. Koberda, {\it Algorithmic Problems in right-angled Artin groups: Complexity and Applications}, J. Algebra {\bf 519} (2019), 111--129.

\bibitem{GK} J. Gryak and D. Kahrobaei, {\it The status of the polycyclic group-based cryptography: A survey and open problems}, Groups Complexity Cryptology, 8:171--186, 2016.

\bibitem{HKKS} M. Habeeb, D. Kahrobaei, C. Koupparis, and V.
Shpilrain, {\it Public key exchange using semidirect product of
(semi)groups},  in: ACNS 2013, Applied Cryptography and Network Security, LNCS {\bf 7954}
(2013), 475--486.

\bibitem{HKS} M. Habeeb, D. Kahrobaei, and V. Shpilrain, {\it A Secret Sharing scheme based on group-presentation
and word problem}, Contemporary Mathematics, AMS {\bf 582} (2012), 143--150.

\bibitem{HKMPPQ}
D. Hart, D.H. Kim, G. Micheli, G. Pascual Perez,  C. Petit, Y. Quek, {\it A Practical Cryptanalysis of WalnutDSA}, LNCS, PKC, 2018.

\bibitem{KK6} D. Kahrobaei, B. Khan, {\it A Non-Commutative Generalization of the El Gamal Key Exchange using Polycyclic Groups}, Proceeding of IEEE, GLOBECOM (2006), 1--5.

\bibitem{KK}  D. Kahrobaei, C. Koupparis, {\it Non-commutative digital signatures using non-commutative groups}, Groups, Complexity, Cryptology {\bf 4} (2012),  377--384.

\bibitem{KM} D. Kahrobaei, K. Mallahi-Karai, {\it Some applications of arithmetic groups in cryptography}, Groups Complexity, Cryptology, De Gruyter {\bf 11}, no. 1, 1--9 (2019).


\bibitem{KS16} D. Kahrobaei, V. Shpilrain, {\it Using semidirect product
of (semi)groups in public key cryptography}, Computability in Europe 2016, LNCS {\bf 9709}, 132--141 (2016).

\bibitem{LanVal} E. Lanneau, F. Valdez, {\it Computing the Teichm\"uller polynomial},
J. Eur. Math. Soc. (JEMS) 19, no. 12, 3867--3910 (2017).

\bibitem{LP}
F. Levy-dit-Vehel, L. Perret, {\it On the Wagner-Magyarik Cryptosystem}. In: Ytrehus (eds) Coding and Cryptography. LNCS, {\bf 3969}, Springer, Berlin, Heidelberg, 2006.


\bibitem{LWZ}
H. Liu, C. Wrathall, K. Zeger, {\it Efficient solution of some problems in free partially commutative
monoids}, Information and Computation, {\bf 89} (1990), 180--198.

\bibitem{McPaper}
C. T. McMullen, {\it Polynomial invariants for fibered 3-manifolds and Teichm\"uller geodesics for foliations}, Ann. Sci. \'Ecole Norm. Sup. (4) 33 (2000), no. 4, 519--560.

\bibitem{P} G. Petrides, {\it Cryptanalysis of the public key cryptosystem based on the word problem on the Grigorchuk groups}, Cryptography and coding, 234--244, 2003.

\bibitem{Romanikov}
V. Roman'kov, {\it An improved version of the AAG cryptographic protocol}, Groups Complex. Cryptol. (2019), appear.

\bibitem{SU} V. Shpilrain and A. Ushakov {\it Thompson's group and public key cryptography}, ACNS, 2005.

\bibitem{SZ} V. Shpilrain and G. Zapata, {\it Combinatorial group theory and public key cryptography}, Applicable Algebra in Engineering, 17, 291--302, 2006.

\bibitem{Thurston}
W. P. Thurston, {\it A norm for the homology of 3-manifolds},
Mem. Amer. Math. Soc. 59 (1986), no. 339, i--vi and 99--130.

\bibitem{Tsaban}
B. Tsaban, {\it Polynomial-time solutions of computational problems in noncommutative-algebraic cryptography},
J. Cryptology {\bf 28} (2015), no. 3, 601--622.

 \bibitem{WM}
N. R. Wagner, M. R. Magyarik, {\it A Public-Key Cryptosystem Based on the Word Problem} In: Blakley G.R., Chaum D. (eds) Advances in Cryptology. CRYPTO 1984. Lecture Notes in Computer Science, {\bf 196}, Springer, Berlin, Heidelberg, 1985.



\end{thebibliography}

\end{document}